\newtheorem{thm}{Theorem}[section]
\newtheorem{cor}[thm]{Corollary}
\newtheorem{prop}[thm]{Proposition}
\theoremstyle{definition}
\theoremstyle{remark}
\numberwithin{equation}{section}
\begin{document}
\title[Approximately quadratic mappings on restricted domains]
      {Approximately quadratic mappings on restricted domains}%
\author[A. Najati, S.-M. Jung]{Abbas Najati and Soon-Mo Jung}
\address{\noindent Abbas Najati \newline
\indent Department of Mathematics
\newline
\indent Faculty of Sciences
\newline
\indent   University of Mohaghegh Ardabili
\newline \indent  Ardabil
\newline \indent Iran}
\email{a.nejati@yahoo.com}
\address{\noindent Soon-Mo Jung \newline
\indent Mathematics Section
\newline
\indent College of Science and Technology
\newline
\indent   Hongik University
\newline \indent 339-701 Jochiwon
\newline \indent Republic of Korea}
\email{smjung@hongik.ac.kr}

\subjclass[2000]{Primary: 39B82, 39B52; 39B62.}
\keywords{Hyers-Ulam stability, quadratic functional equation,
          quadratic mapping, approximate quadratic mapping,
          asymptotic behavior.}

\begin{abstract}
In this paper, we introduce a generalized quadratic functional
equation
\begin{equation*}
f(rx+sy) = rf(x)+sf(y)-rsf(x-y)
\end{equation*}
where $r,\, s$ are nonzero real numbers with $r+s=1$.
We show that this functional equation is quadratic if $r,\, s$
are rational numbers.
We also investigate its stability problem on restricted domains.
These results are applied to study of an asymptotic behavior of
these generalized quadratic mappings.
\end{abstract}
\maketitle

\section{Introduction}

\textit{Under what conditions does there exist a group
homomorphism near an approximate group homomorphism?}
This question concerning the stability of group homomorphisms
was posed by Ulam \cite {ul60}.
The case of approximately additive mappings was solved by
Hyers \cite{hy41} on Banach spaces.
In 1950 Aoki \cite{aok} provided a generalization of the Hyers'
theorem for additive mappings and in 1978 Rassias \cite{ras78}
generalized the Hyers' theorem for linear mappings by allowing
the Cauchy difference to be unbounded (see also \cite{Bou51}).
The result of Rassias' theorem has been generalized by
G\u{a}vruta \cite{ga94} who permitted the Cauchy difference to
be bounded by a general control function.
This stability concept is also applied to the case of other
functional equations.
For more results on the stability of functional equations,
see \cite{Chole}, \cite{Czerwik92}, \cite{FRS}, \cite{For1},
\cite{For2}, \cite{Grabiec}, \cite{HYR}--\cite{Jun},
\cite{Kann95}--\cite{pa02} and \cite{Ras.91}--\cite{Ras.000}.
We also refer the readers to the books \cite{Acz89}, \cite{cz02},
\cite{hir98}, \cite{jung} and \cite{R.b2}.
\par
It is easy to see that the quadratic function $f(x)=x^2$ is a
solution of each of the following functional equations:
\begin{align}
f(x+y)+f(x-y)&=2f(x)+2f(y), \label{q}
\\
f(rx+sy)+rsf(x-y)&=rf(x)+sf(y) \label{Gq}
\end{align}
where $r,\, s$ are nonzero real numbers with $r+s=1$.
So, it is natural that each equation is called a quadratic
functional equation.
In particular, every solution of the quadratic equation
(\ref{q}) is said to be a quadratic function.
It is well known that a function $f : X \to Y$ between real
vector spaces $X$ and $Y$ is quadratic if and only if there
exists a unique symmetric bi-additive function
$B : X \times X \to Y$ such that $f(x)=B(x,x)$ for all
$x \in X$ (see \cite{Acz89}, \cite{hir98}, \cite{Kann95}).
\par
We prove that the functional equations (\ref{q}) and (\ref{Gq})
are equivalent if $r,\, s$ are nonzero rational numbers.
The functional equations (\ref{q}) is a spacial case of
(\ref{Gq}).
Indeed, for the case $r=s=\frac{1}{2}$ in (\ref{Gq}), we get
(\ref{q}).
\par
In 1983 Skof \cite{Skof3} was the first author to solve the
Hyers-Ulam problem for additive mappings on a restricted domain (see
also \cite{HIR98},\cite{S.M.Jung98} and \cite{J.M.S}). In 1998 Jung
\cite{Jung98} investigated the Hyers-Ulam stability for additive and
quadratic mappings on restricted domains (see also \cite{jung0},
\cite{jungk} and \cite{jungs}). Rassias \cite{JRassias02}
investigated the Hyers-Ulam stability of mixed type mappings on
restricted domains.

\section{Solutions of EQ. (\ref{Gq})}

In this section we show that the functional equation (\ref{Gq})
is equivalent to the quadratic equation (\ref{q}).
That is, every solution of Eq. (\ref{Gq}) is a quadratic
function.
We recall that $r,\, s$ are nonzero real numbers with $r+s=1$.

\begin{thm}\label{thm1}
Let $X$ and $Y$ be real vector spaces and $f : X \to Y$ be an
odd function satisfying (\ref{Gq}).
If $r$ is a rational number, then $f \equiv 0$.
\end{thm}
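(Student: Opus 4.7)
The plan is to show that oddness combined with (\ref{Gq}) forces $f$ to be Cauchy additive, and then derive $f \equiv 0$ from the clash between $\mathbb{Q}$-homogeneity and the scaling $f(rx) = r^2 f(x)$.

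First I would specialize (\ref{Gq}) by setting $y = 0$ and then $x = 0$, using $f(0) = 0$ (from oddness), to extract the basic scalings
\[
f(rx) = r^2 f(x), \qquad f(sx) = (1-r^2) f(x).
\]

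Next, writing $P(x,y) := f(x+y) + f(x-y)$, I would produce two auxiliary identities. Replacing $y$ by $-y$ in (\ref{Gq}), using oddness, and adding the resulting equation to (\ref{Gq}) itself gives
\[
P(rx, sy) + rs\, P(x, y) = 2r f(x);
\]
the same manipulation applied after interchanging $x$ and $y$ in (\ref{Gq}) gives
\[
P(sx, ry) - rs\, P(x, y) = 2s f(x).
\]

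The crucial step, and the one I expect to be the main obstacle, is to substitute $(sx, ry)$ in place of $(x, y)$ in the first identity. The left-hand term $P(rsx, rsy)$ collapses to $r^2(1-r^2) P(x, y)$ via $f(rsz) = r^2(1-r^2) f(z)$, after which $P(sx, ry)$ can be eliminated using the second identity. Simplifying the resulting coefficients by means of $1 - r^2 + s^2 = 2s$ and $1 - r^2 - s^2 = 2rs$ (both consequences of $r + s = 1$) should yield
\[
P(x, y) = 2 f(x), \quad\text{i.e.,}\quad f(x+y) + f(x-y) = 2 f(x)
\]
for all $x, y \in X$. The delicate part here is finding the substitution in which the $f$-scalings and the $P$-recurrences cancel simultaneously; once this collapse is on the table, the rest is automatic.

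Exchanging $x$ and $y$ in this Jensen-type identity and using oddness gives $f(x+y) - f(x-y) = 2 f(y)$; adding the two yields $f(x+y) = f(x) + f(y)$, so $f$ is Cauchy additive. Therefore $f$ is $\mathbb{Q}$-linear, and in particular $f(rx) = r f(x)$ since $r$ is rational. Combined with $f(rx) = r^2 f(x)$ from the first step, this gives $rs\, f(x) = r(1 - r) f(x) = 0$ for every $x \in X$; since $rs \neq 0$, we conclude $f \equiv 0$.
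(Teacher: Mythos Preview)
Your proof is correct and follows essentially the same route as the paper: derive the scalings $f(rx)=r^2f(x)$, $f(sx)=s(1+r)f(x)$, obtain the two identities $P(rx,sy)+rsP(x,y)=2rf(x)$ and $P(sx,ry)-rsP(x,y)=2sf(x)$, and combine them via the substitution $(x,y)\mapsto(sx,ry)$ to collapse to $P(x,y)=2f(x)$, whence additivity and the contradiction $r^2f(x)=rf(x)$. The paper performs the same substitution in two stages ($y\mapsto ry$, then $x\mapsto sx$) with intermediate simplifications, but the algebra and the underlying idea are identical.
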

\begin{proof}
Since $f$ is odd, $f(0)=0$.
Letting $x=0$ (respectively, $y=0$) in (\ref{Gq}), we get
\begin{equation}\label{pthm1.1}
f(sy) = s(1+r)f(y), \quad  f(rx) = r^2f(x)
\end{equation}
for all $x, y \in X$.
Replacing $y$ by $-y$ in (\ref{Gq}) and adding the obtained
functional equation to (\ref{Gq}), we get
\begin{equation}\label{pthm1.2}
f(rx+sy)+f(rx-sy)=2rf(x)-rs[f(x+y)+f(x-y)]
\end{equation}
for all $x, y \in X$.
Replacing $y$ by $ry$ in (\ref{pthm1.2}) and using
(\ref{pthm1.1}), we have
\begin{equation}\label{pthm1.3}
r[f(x+sy)+f(x-sy)]=2f(x)-s[f(x+ry)+f(x-ry)]
\end{equation}
for all $x, y \in X$.
Again if we replace $x$ by $sx$ in (\ref{pthm1.3}) and use
(\ref{pthm1.1}), we get
\begin{equation}\label{pthm1.4}
r(1+r)[f(x+y)+f(x-y)]=2(1+r)f(x)-[f(sx+ry)+f(sx-ry)]
\end{equation}
for all $x, y \in X$.
Applying (\ref{Gq}) and using the oddness of $f$, we have
\begin{equation}\label{pthm1.5}
f(sx+ry)+f(sx-ry)=2sf(x)+rs[f(x+y)+f(x-y)]
\end{equation}
for all $x, y$ in $X$.
So it follows from (\ref{pthm1.4}) and (\ref{pthm1.5}) that
\begin{equation}\label{pthm1.6}
f(x+y)+f(x-y)=2f(x)
\end{equation}
for all $x, y$ in $X$.
It easily follows from (\ref{pthm1.6}) that $f$ is additive,
that is, $f(x+y)=f(x)+f(y)$ for all $x, y \in X$.
So if $r$ is a rational number, then $f(rx)=rf(x)$ for all $x$
in $X$.
Therefore it follows from (\ref{pthm1.1}) that $(r^2-r)f(x)=0$
for all $x$ in $X$.
Since $r,\, s$ are nonzero, we infer that $f \equiv 0$.
\end{proof}

\begin{thm}\label{thm2}
Let $X$ and $Y$ be real vector spaces and $f : X \to Y$ be an
even function satisfying (\ref{Gq}).
Then $f$ satisfies (\ref{q}).
\end{thm}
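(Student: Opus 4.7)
The plan is to deduce the quadratic identity (\ref{q}) by producing two independent expressions for
\[
L(x,y) := r\bigl[f(x+sy)+f(x-sy)\bigr] + s\bigl[f(x+ry)+f(x-ry)\bigr]
\]
and equating them. First I would run the preliminary step as in Theorem \ref{thm1}: putting $x=y=0$ in (\ref{Gq}) gives $rsf(0)=0$ and hence $f(0)=0$, while the substitutions $y=0$ and $x=0$ combined with evenness yield $f(rx)=r^2 f(x)$ and $f(sy)=s^2 f(y)$.

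For the first expression for $L(x,y)$, I would plug the four substitutions $(u,v)=(x, x\pm y)$ and $(u,v)=(x\pm y, x)$ into (\ref{Gq}), where $(u,v)$ denote its variables. Adding the two identities coming from $(x, x\pm y)$ and using evenness produces
\[
f(x+sy)+f(x-sy) = 2rf(x) + s\bigl[f(x+y)+f(x-y)\bigr] - 2rsf(y),
\]
and the pair $(x\pm y, x)$ gives the twin
\[
f(x+ry)+f(x-ry) = 2sf(x) + r\bigl[f(x+y)+f(x-y)\bigr] - 2rsf(y).
\]
Multiplying these by $r$ and $s$ respectively and adding yields
\[
L(x,y) = 2(r^2+s^2)f(x) + 2rs\bigl[f(x+y)+f(x-y)\bigr] - 2rsf(y).
\]

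For the second expression, the key move is the inverting substitution $u=a+sb$, $v=a-rb$ in (\ref{Gq}): since $ru+sv=a$ and $u-v=b$, this rewrites (\ref{Gq}) as the dual identity $f(a)+rsf(b) = rf(a+sb)+sf(a-rb)$. Replacing $b$ by $-b$, using evenness, and adding then gives $L(a,b) = 2f(a)+2rsf(b)$. Equating the two expressions for $L$ and using the identity $1-r^2-s^2 = (r+s)^2-r^2-s^2 = 2rs$ makes the $f(x)$-coefficients collapse, leaving $2rs[f(x+y)+f(x-y)] = 4rs[f(x)+f(y)]$, which is (\ref{q}) after dividing by $2rs\neq 0$. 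The main obstacle is spotting the inverting substitution $(u,v)=(a+sb,a-rb)$: without this dual reformulation of (\ref{Gq}) the system of identities obtained from the $(x,x\pm y)$ and $(x\pm y,x)$ substitutions is underdetermined, and the crucial cancellation $1-r^2-s^2=2rs$ has nothing to act on.
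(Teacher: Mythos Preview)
Your proof is correct and takes a genuinely different route from the paper's. The paper proceeds by \emph{nested substitution}: replacing $x$ by $x+y$ in (\ref{Gq}) yields $f(rx+y)=rf(x+y)+sf(y)-rsf(x)$; feeding $y\mapsto x+ry$ into this same identity and then re-applying it to each of the three composite arguments that appear collapses everything (after dividing by $rs$) to $f(2x+y)+2f(x)+f(y)=2f(x+y)+f(2x)$. Setting $y=-x$ gives $f(2x)=4f(x)$, and a final shift $y\mapsto y-x$ produces (\ref{q}). Your approach instead computes the symmetric quantity $L(x,y)$ in two ways, the decisive second computation coming from the \emph{inverting} substitution $(u,v)=(a+sb,a-rb)$, which rewrites (\ref{Gq}) in the dual form $f(a)+rsf(b)=rf(a+sb)+sf(a-rb)$.

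What each buys: your argument is more symmetric and makes the role of the constraint $r+s=1$ completely explicit --- it is used once to guarantee the inverting substitution is a bijection (so the dual identity really is equivalent to (\ref{Gq})) and once in the coefficient cancellation $1-r^2-s^2=2rs$. You also never need the auxiliary fact $f(2x)=4f(x)$. The paper's argument requires fewer identities to be written down and avoids the change-of-variables trick, but the key cancellation is hidden inside the phrase ``by a simple computation,'' and equation (\ref{pthm2.3}) is derived but never actually used. One small remark: your preliminary facts $f(rx)=r^2f(x)$ and $f(sy)=s^2f(y)$ are correct for even $f$, but note that they differ from the corresponding formulas (\ref{pthm1.1}) in Theorem~\ref{thm1}, where oddness gives $f(sy)=s(1+r)f(y)$ instead; your reference ``as in Theorem~\ref{thm1}'' should be read only as ``by the same substitutions.''
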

\begin{proof}
Letting $x=y=0$ in (\ref{Gq}), we get $f(0)=0$.
Replacing $x$ by $x+y$ in (\ref{Gq}), we get
\begin{equation}\label{pthm2.1}
f(rx+y)=rf(x+y)+sf(y)-rsf(x)
\end{equation}
for all $x, y \in X$.
Replacing $y$ by $-y$ in (\ref{pthm2.1}) and using the evenness
of $f$, we get
\begin{equation}\label{pthm2.2}
f(rx-y)=rf(x-y)+sf(y)-rsf(x)
\end{equation}
for all $x, y$ in $X$.
Adding (\ref{pthm2.1}) to (\ref{pthm2.2}), we obtain
\begin{equation}\label{pthm2.3}
f(rx+y)+ f(rx-y)=r[f(x+y)+f(x-y)]+2sf(y)-2rsf(x)
\end{equation}
for all $x, y \in X$.
Replacing $y$ by $x+ry$ in (\ref{pthm2.1}), we get
\begin{equation}\label{pthm2.4}
f\big(r(x+y)+x\big)=rf(2x+ry)+sf(x+ry)-rsf(x)
\end{equation}
for all $x, y$ in $X$.
Using (\ref{pthm2.1}) in (\ref{pthm2.4}), by a simple
computation, we get
\begin{equation}\label{pthm2.5}
f(2x+y)+2f(x)+f(y)=2f(x+y)+f(2x)
\end{equation}
for all $x, y$ in $X$.
Putting $y=-x$ in (\ref{pthm2.5}), we get that
$f(2x)=4f(x)$ for all $x \in X$.
Therefore it follows from (\ref{pthm2.5}) that
\begin{equation}\label{pthm2.6}
f(2x+y)+f(y)=2f(x+y)+2f(x)
\end{equation}
for all $x, y$ in $X$.
Replacing $y$ by $y-x$ in (\ref{pthm2.6}), we get that
$f(x+y)+f(y-x)=2f(x)+2f(x)$ for all $x, y \in X$.
So $f$ satisfies (\ref{q}).
\end{proof}

\begin{thm}\label{thm3}
Let  $f : X \to Y$ be a function between real vector spaces
$X$ and $Y$.
If $r$ is a rational number, then $f$ satisfies (\ref{Gq}) if
and only if $f$ satisfies (\ref{q}).
\end{thm}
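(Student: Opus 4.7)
The plan is to prove both directions separately. For the forward implication (Gq)$\Rightarrow$(q), I will split $f$ into its even and odd parts and use Theorems \ref{thm1} and \ref{thm2}. For the converse (q)$\Rightarrow$(Gq), I will exploit the standard representation of a quadratic function by a symmetric bi-additive map.

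For the forward direction, observe that the functional equation (\ref{Gq}) is linear in $f$. Writing $f_o(x) = \tfrac{1}{2}(f(x)-f(-x))$ and $f_e(x)=\tfrac{1}{2}(f(x)+f(-x))$, I would substitute $(-x,-y)$ into (\ref{Gq}) and then form the half-sum and half-difference with the original equation; this shows immediately that $f_e$ and $f_o$ each satisfy (\ref{Gq}). Since $f_o$ is odd and $r$ is rational, Theorem \ref{thm1} forces $f_o \equiv 0$, so $f=f_e$ is even. Then Theorem \ref{thm2} yields that $f$ satisfies (\ref{q}).

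For the converse, suppose $f$ satisfies (\ref{q}). Then there is a symmetric bi-additive $B:X\times X\to Y$ with $f(x)=B(x,x)$. Bi-additivity gives $B(qu,v)=qB(u,v)$ for every rational $q$, and in particular $f(rx+sy)=r^2f(x)+2rsB(x,y)+s^2f(y)$ since $r$ (and hence $s=1-r$) is rational. From $f(x-y)=f(x)-2B(x,y)+f(y)$ I get $2B(x,y)=f(x)+f(y)-f(x-y)$. Substituting and using $r+s=1$ yields
\begin{equation*}
f(rx+sy)+rsf(x-y) = r(r+s)f(x)+s(r+s)f(y) = rf(x)+sf(y),
\end{equation*}
which is (\ref{Gq}).

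The only delicate point is the rationality hypothesis: it is essential both in Theorem \ref{thm1} (to kill the odd part) and in the expansion $f(rx+sy)=r^2f(x)+2rsB(x,y)+s^2f(y)$ in the converse, since bi-additive maps are only $\mathbb{Q}$-homogeneous in general. Apart from that, the argument is a routine combination of the two preceding theorems with the standard even/odd decomposition.
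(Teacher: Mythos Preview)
Your proof is correct and follows essentially the same route as the paper: the forward direction uses the even/odd decomposition together with Theorems~\ref{thm1} and~\ref{thm2}, and the converse expands $f(rx+sy)$ via the symmetric bi-additive map $B$ using the rationality of $r$ and $s$. The only cosmetic difference is that the paper computes $rf(x)+sf(y)-rsf(x-y)$ directly and collapses it to $B(rx+sy,rx+sy)$, whereas you first isolate $2B(x,y)$ from $f(x-y)$ and substitute; both arrive at the same identity.
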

\begin{proof}
Let $f_o$ and $f_e$ be the odd and the even part of $f$.
Suppose that $f$ satisfies (\ref{Gq}).
It is clear that $f_o$ and $f_e$ satisfy (\ref{Gq}).
By Theorems \ref{thm1} and \ref{thm2}, $f_o \equiv 0$ and
$f_e$ satisfies (\ref{q}).
Since $f = f_o + f_e$, we conclude that $f$ satisfies
(\ref{q}).
\par
Conversely, let $f$ satisfy (\ref{q}).
Then there exists a unique symmetric bi-additive function
$B : X \times X \to Y$ such that $f(x)=B(x,x)$ for all
$x \in X$ (see \cite{Kann95}).
Therefore
\begin{align*}
&rf(x)+sf(y)-rsf(x-y)
\\
&=rB(x,x)+sB(y,y)-rsB(x-y,x-y)
\\
&=r^2B(x,x)+s^2B(y,y)+2rsB(x,y) \quad
(\mbox{$r,\, s$ are rational numbers})
\\
&=B(rx+sy,rx+sy)=f(rx+sy)
\end{align*}
for all $x, y \in X$.
So $f$ satisfies (\ref{Gq}).
\end{proof}

\begin{prop}
Let $\mathcal{X}$ be a linear space with the norm $\| \cdot \|$.
$\mathcal{X}$ is an inner product space if and only if there
exists a real number $0 < r < 1$ such that
\begin{equation}\label{inner0}
\|rx+sy\|^2+rs\|x-y\|^2=r\|x\|^2+s\|y\|^2
\end{equation}
for all $x, y \in \mathcal{X}$, where $s = 1 - r$.
\end{prop}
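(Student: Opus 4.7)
The plan is to recognize identity (\ref{inner0}) as the functional equation (\ref{Gq}) applied to the squared-norm map, and then invoke Theorem \ref{thm2} together with the classical Jordan--von Neumann theorem. Specifically, I would set $f(x) = \|x\|^2$ and observe that (\ref{inner0}) rearranges to $f(rx+sy) + rs f(x-y) = rf(x) + sf(y)$, which is exactly (\ref{Gq}); moreover $f$ is automatically even because $\|-x\| = \|x\|$.

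For the forward direction, assuming $\mathcal{X}$ is an inner product space, I would expand both $\|rx+sy\|^2$ and $rs\|x-y\|^2$ via bilinearity of the inner product, collect terms, and use $r+s=1$ to obtain $r\|x\|^2 + s\|y\|^2$; this establishes (\ref{inner0}) for every $r$, so in particular for some $r\in(0,1)$.

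For the converse, I would apply Theorem \ref{thm2} to $f(x) = \|x\|^2$. Theorem \ref{thm2} requires only that $r$ and $s = 1-r$ be nonzero reals summing to $1$, and the hypothesis $0 < r < 1$ secures both. The conclusion is that $f$ satisfies (\ref{q}), which reads
\[
\|x+y\|^2 + \|x-y\|^2 = 2\|x\|^2 + 2\|y\|^2,
\]
i.e., the parallelogram identity. The Jordan--von Neumann theorem then furnishes an inner product on $\mathcal{X}$ inducing $\|\cdot\|$.

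There is no serious obstacle here: the content of the proposition resides entirely in Theorem \ref{thm2}, whose proof, importantly, does \emph{not} require $r$ to be rational. Once one recognizes (\ref{inner0}) as a disguised instance of (\ref{Gq}) for the even function $x\mapsto\|x\|^2$, the rest is bookkeeping plus one appeal to the parallelogram-law characterization of inner product spaces.
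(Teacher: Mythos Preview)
Your proposal is correct and follows essentially the same route as the paper: set $f(x)=\|x\|^2$, recognize (\ref{inner0}) as (\ref{Gq}) for this even $f$, deduce the parallelogram law, and invoke Jordan--von Neumann. The only difference is that the paper cites Theorem~\ref{thm3} (after noting $f$ is even), whereas you go straight to Theorem~\ref{thm2}; your choice is slightly sharper, since Theorem~\ref{thm2} carries no rationality hypothesis on $r$ and is exactly what the even case requires.
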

\begin{proof}
Let $f : \mathcal{X} \to \Bbb{R}$ be a function defined by
$f(x)=\| x \|^2$.
If $\mathcal{X}$ is an inner product space, then $f$ satisfies
(\ref{inner0}) for all $r \in \Bbb{R}$.
Conversely, let $r \in (0,1)$ and the (even) function $f$
satisfy (\ref{inner0}).
So $f$ satisfies (\ref{Gq}).
By Theorem \ref{thm3}, the function $f$ satisfies (\ref{q}),
that is,
$$
\| x+y \|^2 + \| x-y \|^2 = 2\| x \|^2 + 2\| y \|^2
$$
for all $x, y \in \mathcal{X}$.
Therefore $\mathcal{X}$ is an inner product space
(see \cite{JV}).
\end{proof}
\begin{prop}
Let $p,q,u,v\in\Bbb{R}\setminus\{0\}$ and $\mathcal{X}$ be a
linear space with the norm $\| \cdot \|$.
Suppose that
\begin{equation}\label{inner}
    \|rx+sy\|^p+rs\|x-y\|^q=r\|x\|^u+s\|y\|^v
\end{equation}
for all $x,y$ in $\mathcal{X}$, where $0<r<1$ and $s=1-r.$ Then
$p=q=u=v=2.$
\end{prop}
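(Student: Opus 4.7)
The plan is to specialize (\ref{inner}) to the cases $y=0$ and $x=0$, reducing the two-variable identity to a pair of one-variable power-function identities; these in turn pin down all four exponents.

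Setting $y=0$ in (\ref{inner}) and dividing through by $r$ yields
\begin{equation*}
r^{p-1}\|x\|^{p}+s\|x\|^{q}=\|x\|^{u}\qquad(x\in\mathcal{X}).
\end{equation*}
Any nontrivial normed space realizes every positive real as a norm (just scale a fixed nonzero vector), so this is equivalent to
\begin{equation*}
r^{p-1}t^{p}+st^{q}=t^{u}\qquad(t>0).
\end{equation*}
Setting $x=0$ in (\ref{inner}) and dividing by $s$ gives the companion identity
\begin{equation*}
s^{p-1}t^{p}+rt^{q}=t^{v}\qquad(t>0).
\end{equation*}

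The key auxiliary claim is that whenever $\alpha,\beta>0$ and
\begin{equation*}
\alpha t^{a}+\beta t^{b}=t^{c}\qquad(t>0),
\end{equation*}
one must have $a=b=c$ and $\alpha+\beta=1$. I would prove this by asymptotic comparison: if $a>b$, divide by $t^{a}$ and let $t\to\infty$ in $\alpha+\beta t^{b-a}=t^{c-a}$. Because $b-a<0$, the left side tends to $\alpha>0$, whereas $t^{c-a}$ possesses a finite positive limit only when $c=a$, and then the limit equals $1$; matching forces $\alpha=1$, but then $\beta t^{b-a}=0$ contradicts $\beta>0$. By symmetry, $a<b$ is also impossible, so $a=b$; substituting back and evaluating at $t=1$ and $t=2$ yields $\alpha+\beta=1$ and $c=a$.

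Applying this claim to the first one-variable identity gives $p=q=u$ together with $r^{p-1}+s=1$. Since $r+s=1$, the latter simplifies to $r^{p-1}=r$, which forces $p=2$ because $0<r<1$; hence $p=q=u=2$. Applying the claim to the second identity analogously gives $p=q=v$ and $s^{p-1}=s$, so $v=2$, and therefore $p=q=u=v=2$. The one delicate step is the asymptotic case analysis in the auxiliary claim, which must remain valid when some of $a,b,c$ are negative; the argument above handles all signs uniformly.
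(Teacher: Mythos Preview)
Your argument is correct. The auxiliary claim is sound: the asymptotic case split cleanly forces $a=b=c$ and $\alpha+\beta=1$, and the applications to the two one-variable identities obtained from $y=0$ and $x=0$ then yield $p=q=u=2$ and $p=q=v=2$ exactly as you describe.

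The route differs from the paper's. The paper also specializes to $y=0$ and $x=0$, but instead of an asymptotic lemma it evaluates the $y=0$ identity at $\|x\|=1$ to read off $r^{p}+rs=r$, hence $p=2$, directly. To recover the remaining exponents it brings in a third specialization, $y=x$, giving $\|x\|^{2}=r\|x\|^{u}+s\|x\|^{v}$; linear combination of this with the $y=0$ and $x=0$ identities (now with $p=2$) produces $\|x\|^{u}=\|x\|^{v}$, whence $u=v=2$, and finally $q=2$ drops out. So the paper trades your limit argument for one extra substitution and some elementary algebra, staying entirely at the level of finite evaluations; your approach uses fewer substitutions but packages the work into a reusable lemma about power-function identities. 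Both are short, and your version has the mild advantage that the lemma makes transparent \emph{why} such identities are rigid.
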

\begin{proof}
Setting $y=0$ in (\ref{inner}), we get
\begin{equation}\label{p.prop.1}
  |r|^p\|x\|^p+rs\|x\|^q=r\|x\|^u
\end{equation}
for all $x$ in $\mathcal{X}$.
If we take $x\in \mathcal{X}$ with $\|x\|=1$ in (\ref{p.prop.1}),
we get that $p=2$.
Letting $y=x$ in (\ref{inner}), we get
\begin{equation}\label{p.prop.2}
  \|x\|^2=r\|x\|^u+s\|x\|^v
\end{equation}
for all $x$ in $\mathcal{X}$. Letting $x=0$ in (\ref{inner}),
we get
\begin{equation}\label{p.prop.3}
  r\|y\|^q=\|y\|^v-s\|y\|^2
\end{equation}
for all $y$ in $\mathcal{X}$. Since $p=2$, it follows from
(\ref{p.prop.1}) and (\ref{p.prop.3}) that
\begin{equation}\label{p.prop.4}
  r\|x\|^u-s\|x\|^v=(r-s)\|x\|^2
\end{equation}
for all $x \in \mathcal{X}$.
Using (\ref{p.prop.2}) and (\ref{p.prop.4}), we get
$\|x\|^u = \|x\|^v$ for all $x \in \mathcal{X}$.
Hence $u=v$ and (\ref{p.prop.2}) implies that $u=v=2$.
Finally, $q=2$ follows from (\ref{p.prop.3}).
\end{proof}
\begin{cor}
Let $\mathcal{X}$ be a linear space with the norm $\| \cdot \|$.
$\mathcal{X}$ is an inner product space if and only if there exists
a real number $0 < r < 1$ and  $p,q,u,v\in\Bbb{R}\setminus\{0\}$
such that
\[
 \|rx+sy\|^p+rs\|x-y\|^q=r\|x\|^u+s\|y\|^v
\]
for all $x, y \in \mathcal{X}$, where $s = 1 - r$.
\end{cor}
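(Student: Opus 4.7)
The plan is to derive this corollary as an immediate consequence of the two preceding propositions, so the proof should amount essentially to citing them and checking that the hypotheses line up correctly in each direction.

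For the "only if" direction, I would suppose $\mathcal{X}$ is an inner product space and exhibit a concrete choice of parameters: take any $r \in (0,1)$, set $s = 1-r$, and set $p = q = u = v = 2$. Then the first proposition (or a direct expansion of $\|rx+sy\|^2$ and $\|x-y\|^2$ via the inner product) shows that the displayed equation holds. So the existence of the desired $r, p, q, u, v$ is trivial in this direction.

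For the "if" direction, assume we are given some $r \in (0,1)$ and $p, q, u, v \in \mathbb{R} \setminus \{0\}$ such that the equation holds. I would invoke the previous proposition directly: its conclusion forces $p = q = u = v = 2$. Substituting these values back into the given identity reproduces exactly equation (\ref{inner0}) from the first proposition. Applying that proposition then yields that $\mathcal{X}$ is an inner product space.

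There is essentially no obstacle, since both directions are bookkeeping built on top of machinery already established: the first proposition handles the transition from (\ref{inner0}) to the inner product structure (via Theorem~\ref{thm3} and the characterization of inner product spaces by the parallelogram law), while the second proposition pins down the exponents. The only minor care needed is to confirm that the hypothesis $0 < r < 1$ carries over unchanged between the two propositions and the corollary, which it does, so the composition is clean.
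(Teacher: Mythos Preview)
Your proposal is correct and matches the paper's intent: the corollary is stated without proof precisely because it is the immediate combination of the two preceding propositions, exactly as you outline. Nothing further is needed.
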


\section{Stability of of EQ. (\ref{Gq}) on restricted domains}

In this section, we investigate the Hyers-Ulam stability of
the functional equation (\ref{Gq}) on a restricted domain.
As an application we use the result to the study of an
asymptotic behavior of that equation.
It should be mentioned that Skof \cite{Skof} was the first
author who treats the Hyers-Ulam stability of the quadratic
equation.
Czerwik \cite{Czerwik92} proved a Hyers-Ulam-Rassias
stability theorem on the quadratic equation.
As a particular case he proved the following theorem.
\begin{thm}\label{czerwik}
Let $\delta \ge 0$ be fixed.
If a mapping $f : X \to Y$ satisfies the inequality
$$
\| f(x+y)+f(x-y)-2f(x)-2f(y) \| \le \delta
$$
for all $x, y \in X$, then there exists a unique quadratic
mapping $Q : X \to Y$ such that
$\| f(x)-Q(x) \| \le \frac{\delta}{2}$ for all $x \in X$.
Moreover, if $f$ is measurable or if $f(tx)$ is continuous in
$t$ for each fixed $x \in X$, then $Q(tx)=t^2Q(x)$ for all
$x \in X$ and $t \in \Bbb{R}$.
\end{thm}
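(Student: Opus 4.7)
The plan is to apply the direct method of Hyers. First I would derive two basic estimates by specialization: setting $x=y=0$ in the stability hypothesis gives $2\|f(0)\| \le \delta$, so $\|f(0)\| \le \delta/2$, while setting $y=x$ yields $\|f(2x)-4f(x)+f(0)\| \le \delta$. Combining these, I obtain the key one-step estimate
\[
\left\| \frac{f(2x)}{4} - f(x) \right\| \le \frac{3\delta}{8}
\]
for every $x \in X$.

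Next I would iterate. Replacing $x$ by $2^n x$ and dividing by $4^n$ gives $\|f(2^{n+1}x)/4^{n+1} - f(2^n x)/4^n\| \le (3\delta/8)\cdot 4^{-n}$, whose sum over $n$ is the convergent geometric series $(3\delta/8)(4/3)=\delta/2$. Assuming $Y$ is complete (an implicit hypothesis for the iteration to close), the sequence $\{f(2^n x)/4^n\}$ is Cauchy, so $Q(x):=\lim_{n\to\infty} f(2^n x)/4^n$ exists; passing to the limit in the partial-sum bound gives $\|f(x)-Q(x)\| \le \delta/2$. To verify that $Q$ satisfies (\ref{q}), I would apply the stability inequality to $(2^n x, 2^n y)$ and divide by $4^n$; the right-hand side $\delta/4^n$ vanishes as $n\to\infty$. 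Uniqueness is the standard scaling argument: any other quadratic $Q'$ with $\|f-Q'\|\le\delta/2$ yields a quadratic difference $D:=Q-Q'$ satisfying $\|D\|\le\delta$ and $D(2^n x)=4^n D(x)$, forcing $\|D(x)\|\le \delta/4^n\to 0$.

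For the \emph{moreover} clause, I would use the bijection recalled in the introduction between quadratic maps $Q$ and symmetric biadditive maps $B$ with $Q(x)=B(x,x)$, which automatically yields $\mathbb{Q}$-homogeneity of degree two. The key observation is that the uniform bound $\|f-Q\|\le\delta/2$ combined with $Q(2^n y)=4^n Q(y)$ gives
\[
\left\|\frac{f(2^n tx)}{4^n} - Q(tx)\right\| = \frac{\|f(2^n tx) - Q(2^n tx)\|}{4^n} \le \frac{\delta}{2 \cdot 4^n},
\]
so the convergence $f(2^n tx)/4^n \to Q(tx)$ is uniform in $t$. Hence if $f$ is measurable (or $t\mapsto f(tx)$ continuous for each $x$), then $t\mapsto Q(tx)$, and therefore $t\mapsto B(tx,y)=\tfrac14(Q(tx+y)-Q(tx-y))$, inherits the same regularity. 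Since $B$ is additive in the first slot, the classical regularity theorem for the Cauchy equation upgrades its $\mathbb{Q}$-homogeneity in each slot to $\mathbb{R}$-homogeneity, giving $Q(tx)=B(tx,tx)=t^2 B(x,x)=t^2 Q(x)$ for all real $t$.

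The main obstacle is this last step: one must carefully transfer the regularity assumption on $f$ through the passage to the limit $Q$, then through the biadditive representation, in order to legitimately invoke the Cauchy-equation regularity theorems. The earlier existence, uniqueness, and $\delta/2$-approximation statements are a routine Hyers-style iteration once the first one-step estimate is in hand.
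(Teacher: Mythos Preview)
The paper does not actually prove this statement: Theorem~\ref{czerwik} is quoted from Czerwik~\cite{Czerwik92} as background, with no proof given. Your outline is the standard Hyers direct-method argument and is correct; in particular the one-step bound $\|f(2x)/4-f(x)\|\le 3\delta/8$ and the geometric sum yielding exactly $\delta/2$ are right, and your treatment of the ``moreover'' clause via the uniform-in-$t$ convergence $4^{-n}f(2^n tx)\to Q(tx)$ followed by the regularity theorem for additive functions (applied to the biadditive $B$) is the expected route.
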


We recall that $r,\, s$ are nonzero real numbers with
$r+s=1$.

\begin{thm}\label{thm1.r}
Let $d > 0$ and $\delta \ge 0$ be given.
Assume that an even mapping $f : X \to Y$ satisfies the
inequality
\begin{equation}\label{thm1.r.1}
\| f(rx+sy)+rsf(x-y)-rf(x)-sf(y) \| \le \delta
\end{equation}
for all $x, y \in X$ with $\| x \| + \| y \| \ge d$.
Then there exists $K > 0$ such that $f$ satisfies
\begin{equation}\label{thm1.r.2}
\| f(x+y)+f(x-y)-2f(x)-2f(y) \|
\le \frac{4(2+|r|+|s|)}{|rs|} \delta
\end{equation}
for all $x, y \in X$ with $\| x \| + \| y \| \ge K$.
\end{thm}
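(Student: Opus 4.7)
Plan. The strategy is to transcribe the proof of Theorem~\ref{thm2}---which derives (\ref{q}) from exact (\ref{Gq}) for even $f$ via a finite chain of substitutions---into the approximate restricted-domain setting, invoking the hypothesis (\ref{thm1.r.1}) in place of each use of (\ref{Gq}) and choosing $K$ large enough that every substituted pair $(u,v)$ satisfies $\|u\|+\|v\|\ge d$.

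Theorem~\ref{thm2}'s derivation of (\ref{pthm2.5}) uses four instances of (\ref{Gq}), corresponding to the pairs $(2x+ry,x+ry)$, $(2x+y,x)$, $(2x+y,2x)$ and $(x+y,x)$. The triangle inequality gives $\|u\|+\|v\|\ge c(r,s)(\|x\|+\|y\|)$ for each of them---for instance $\|x+y\|+\|x\|\ge\max\{\|x\|,\|y\|\}\ge\tfrac12(\|x\|+\|y\|)$ and $\|2x+ry\|+\|x+ry\|\ge\max\{\|x\|,\tfrac12|r|\|y\|\}$---and taking $K:=d/c(r,s)$ with $c(r,s)$ the smallest of these constants puts every pair in the restricted domain, so each of the four instances of (\ref{thm1.r.1}) contributes an error of at most $\delta$. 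Combining them with the same coefficients as in the proof of Theorem~\ref{thm2} and dividing by $rs$ at the final rearrangement yields
\[
\bigl\|f(2x+y)+2f(x)+f(y)-2f(x+y)-f(2x)\bigr\|\le\frac{2+|r|+|s|}{|rs|}\,\delta.
\]

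Renaming $y\mapsto y-x$ in this inequality (whose underlying substitutions remain in the restricted domain since $\|x\|+\|y-x\|\ge\tfrac12(\|x\|+\|y\|)\ge K/2$) and using that $f$ is even produces
\[
\bigl\|\phi(x,y)-\bigl(f(2x)-4f(x)\bigr)\bigr\|\le\frac{2+|r|+|s|}{|rs|}\,\delta,
\]
where $\phi(x,y):=f(x+y)+f(x-y)-2f(x)-2f(y)$. The same derivation with the roles of $x$ and $y$ interchanged bounds $\|\phi(x,y)-(f(2y)-4f(y))\|$ by the same quantity, and comparing the two inequalities shows that the ``quadratic defect'' $x\mapsto f(2x)-4f(x)$ is approximately constant throughout the domain.

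The subtle point---and the hardest step of the argument---is pinning down this near-constant, because the Theorem~\ref{thm2} shortcut of substituting $y=-x$ in (\ref{pthm2.5}) requires $\|x\|$ alone to be large, which the hypothesis $\|x\|+\|y\|\ge K$ does not ensure. I would resolve this by fixing any auxiliary $x_0$ with $\|x_0\|\ge K$ and evaluating the $\phi$-inequality above at $(x_0,0)$, where $\phi(x_0,0)=-2f(0)$ in closed form; together with the bound $\|f(0)\|\le\delta/|rs|$ (obtained from (\ref{thm1.r.1}) applied at $(x,x)$ for any $x$ with $\|x\|\ge d/2$), this pins the near-constant down to order $\delta/|rs|$. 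Summing the $(2+|r|+|s|)/|rs|$-sized contributions that arise in this reduction produces the stated constant $\tfrac{4(2+|r|+|s|)}{|rs|}$ in (\ref{thm1.r.2}).
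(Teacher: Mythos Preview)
Your plan is essentially the paper's own proof: the same four instances of (\ref{thm1.r.1}) at the pairs $(2x+ry,x+ry)$, $(2x+y,x)$, $(2x+y,2x)$, $(x+y,x)$ are combined (with weights $1,1,|r|,|s|$, then divided by $|rs|$) to obtain
\[
\|f(2x+y)+2f(x)+f(y)-2f(x+y)-f(2x)\|\le\frac{2+|r|+|s|}{|rs|}\,\delta=:C,
\]
followed by the substitution $y\mapsto y-x$. The paper verifies the domain constraints by explicit case distinctions rather than via a single constant $c(r,s)$, but that is cosmetic.

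Where you diverge is the final step, and there you have made things harder than necessary. Your ``subtle point''---that setting $y=-x$ (or $y=0$) forces $\|x\|$ alone to be large---is not an obstacle. The paper simply \emph{does} restrict to $\|x\|\ge 2M$: plugging $y=0$ into (\ref{pthm1.r.5}) gives $\|4f(x)-f(2x)-2f(0)\|\le C$, plugging $x=0$ gives $\|f(0)\|\le C$, and combining with (\ref{pthm1.r.5}) yields $\|\phi(x,y)\|\le 4C$ for all such $x$ and arbitrary $y$. Evenness of $f$ then gives the same bound whenever $\|y\|\ge 2M$, and since $\|x\|+\|y\|\ge 4M$ forces one of the two, the theorem follows with $K=4M$. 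Your detour via ``$D(x):=f(2x)-4f(x)$ is approximately constant'' together with an auxiliary $x_0$ is valid in principle, but the chain $\|\phi-D(x)\|\le C$, $\|D(x)-D(x_0)\|\le 2C$, $\|D(x_0)\|\le C+2\|f(0)\|$ accumulates to strictly more than $4C$, so your claim of recovering the exact constant $\tfrac{4(2+|r|+|s|)}{|rs|}$ is not supported by the route you sketched.
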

\begin{proof}
Let $x, y \in X$ with $\|x\|+\|y\| \ge 2d$.
Then, since $\| x+y \| + \| y \| \ge \max\{ \|x\|, 2\|y\|-\|x\| \}$,
we get $\| x+y \| + \| y \| \ge d$.
So it follows from (\ref{thm1.r.1}) that
\begin{equation}\label{pthm1.r.1}
\| f(rx+y)+rsf(x)-rf(x+y)-sf(y) \| \le \delta
\end{equation}
for all $x, y \in X$ with $\| x \| + \| y \| \ge 2d$.
So
\begin{equation}\label{pthm1.r.2}
\| f(ry+x)+rsf(y)-rf(x+y)-sf(x) \| \le \delta
\end{equation}
for all $x, y \in X$ with $\| x \| + \| y \| \ge 2d$.

Let $x, y \in X$ with $\| x \| + \| y \| \ge 4d \Big(
\frac{1}{|r|} + \big| 1 - \frac{1}{|r|} \big| \Big)$.
We have two cases: \\
\textbf{Case I.}
$\| y \| > \frac{2d}{|r|}$.
Then $\| x \| + \| x+ry \| \ge |r| \| y \| \ge 2d$. \\
\textbf{Case II.}
$\| y \| \le \frac{2d}{|r|}$.
Then we have $\| x \| \ge 2d \Big( \frac{1}{|r|} +
2 \big| 1 - \frac{1}{|r|} \big| \Big)$.
So
$$
\| x \| + \| x+ry \| \ge 2\| x \| - |r| \| y \|
\ge 2d \Big( \frac{2}{|r|} + 4 \big| 1 - \frac{1}{|r|} \big|
- 1 \Big) \ge 2d.
$$
Therefore we get that $\| x \| + \| x+ry \| \ge 2d$ from cases
I and II.
Hence by (\ref{pthm1.r.1}) we have
\begin{equation}\label{pthm1.r.3}
\| f\big( r(x+y)+x \big) + rsf(x) - rf(2x+ry) - sf(x+ry) \|
\le \delta
\end{equation}
for all $x, y \in X$ with $\| x \| + \| y \| \ge 4d \Big(
\frac{1}{|r|} + \big| 1 - \frac{1}{|r|} \big| \Big)$.
Set $M := 4d \Big( \frac{1}{|r|} + \big| 1 - \frac{1}{|r|}
\big| \Big)$.
Then
$$
\| x+y \| + \| x \| \ge \frac{M}{2} \ge 2d, \quad
\| 2x \| + \| y \| \ge M \ge 4d
$$
for all $x, y \in X$ with $\| x \| + \| y \| \ge M$.
From (\ref{pthm1.r.1}) and (\ref{pthm1.r.2}), we get the
following inequalities:
\begin{align*}
& \| f\big( r(x+y)+x \big) + rsf(x+y) - rf(2x+y) - sf(x) \|
  \le \delta, \\
& \| rf(ry+2x) + r^2 sf(y) - r^2 f(2x+y) - rsf(2x) \|
  \le \delta |r|, \\
& \| sf(ry+x) + rs^2 f(y) - rsf(x+y) - s^2 f(x) \|
  \le \delta |s|.
\end{align*}
Using (\ref{pthm1.r.3}) and the above inequalities, we get
\begin{equation}\label{pthm1.r.4}
\| f(2x+y)+2f(x)+f(y)-2f(x+y)-f(2x) \|
\le \frac{2+|r|+|s|}{|rs|} \delta
\end{equation}
for all $x, y \in X$ with $\| x \| + \| y \| \ge M$.
If $x, y \in X$ with $\| x \| + \| y \| \ge 2M$, then
$\| x \| + \| y-x \| \ge M$.
So it follows from (\ref{pthm1.r.4}) that
\begin{equation}\label{pthm1.r.5}
\| f(x+y)+2f(x)+f(y-x)-2f(y)-f(2x) \|
\le \frac{2+|r|+|s|}{|rs|} \delta.
\end{equation}
Letting $y=0$ in (\ref{pthm1.r.5}), we get
\begin{equation}\label{pthm1.r.6}
\| 4f(x)-f(2x)-2f(0) \| \le \frac{2+|r|+|s|}{|rs|} \delta
\end{equation}
for all $x, y \in X$ with $\| x \| \ge 2M$.
Letting $x=0$ (and $y \in X$ with $\| y \| \ge 2M$) in
(\ref{pthm1.r.5}), we get $\| f(0) \| \le \frac{2+|r|+|s|}{|rs|}
\delta$.
Therefore it follows from (\ref{pthm1.r.5}) and
(\ref{pthm1.r.6}) that
\begin{equation}\label{pthm1.r.7}
\begin{aligned}
\| & f(x+y)+f(y-x)-2f(x)-2f(y) \| \\
   & \le \| f(x+y)+2f(x)+f(y-x)-2f(y)-f(2x) \| \\
   & \quad + \| 4f(x)-f(2x)-2f(0) \| + 2 \| f(0) \| \\
   & \le \frac{4(2+|r|+|s|)}{|rs|} \delta
\end{aligned}
\end{equation}
for all $x, y \in X$ with $\| x \| \ge 2M$.
Since $f$ is even, the inequality (\ref{pthm1.r.7}) holds for
all $x, y \in X$ with $\| y \| \ge 2M$.
Therefore
$$
\| f(x+y)+f(x-y)-2f(x)-2f(y) \|
\le \frac{4(2+|r|+|s|)}{|rs|} \delta
$$
for all $x, y \in X$ with $\| x \| + \| y \| \ge 4M$.
This completes the proof by letting $K := 4M$.
\end{proof}

\begin{thm}\label{thm2.r}
Let $d > 0$ and $\delta \ge 0$ be given.
Assume that an even mapping $f : X \to Y$ satisfies the
inequality (\ref{thm1.r.1}) for all $x, y \in X$ with
$\| x \| + \| y \| \ge d$.
Then $f$ satisfies
\begin{equation}\label{thm2.r.1}
\| f(x+y)+f(x-y)-2f(x)-2f(y) \|
\le \frac{19(2+|r|+|s|)}{|rs|} \delta
\end{equation}
for all $x, y \in X$.
\end{thm}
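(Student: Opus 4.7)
The plan is to leverage Theorem \ref{thm1.r} and then extend the quadratic-difference bound from the restricted region $\|x\|+\|y\|\ge K$ to all of $X\times X$ by a shifting argument. Write
$$\Delta(x,y) := f(x+y)+f(x-y)-2f(x)-2f(y),$$
and set $C := \dfrac{2+|r|+|s|}{|rs|}\delta$. Theorem \ref{thm1.r} already gives $\|\Delta(x,y)\|\le 4C$ for $\|x\|+\|y\|\ge K$, so the claimed inequality $\|\Delta(x,y)\|\le 19C$ holds automatically on that part of $X\times X$; the only remaining task is to control $\|\Delta(x,y)\|$ when $x,y$ are both small.

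Next I would verify the purely algebraic identity
$$2\Delta(x,y) \;=\; \Delta(x+z,y)+\Delta(x-z,y)-\Delta(x+y,z)-\Delta(x-y,z)+2\Delta(x,z),$$
which is valid for any function $f$ and any $z\in X$. This is checked by expanding each $\Delta$ into its four $f$-values: the eight terms $f(x\pm y\pm z)$, as well as the $f(x\pm z)$ and the $f(z)$ contributions, cancel pairwise, leaving exactly $2f(x+y)+2f(x-y)-4f(x)-4f(y)=2\Delta(x,y)$. This identity is the key tool; it encodes the familiar shift-invariance of the Jordan--von Neumann quadratic deficiency.

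Given $x,y\in X$ with $\|x\|+\|y\|<K$, I would then choose an auxiliary $z\in X$ (possible whenever $X\ne\{0\}$, the trivial case being vacuous) with norm so large that each of the five arguments on the right-hand side satisfies the hypothesis of Theorem \ref{thm1.r}; the concrete choice $\|z\|\ge K+\|x\|+\|y\|$ suffices, since then $\|x\pm z\|+\|y\|\ge \|z\|-\|x\|+\|y\|\ge K$, and also $\|x\pm y\|+\|z\|\ge \|z\|\ge K$ and $\|x\|+\|z\|\ge K$. Applying Theorem \ref{thm1.r} to each of the five summands yields
$$2\|\Delta(x,y)\|\;\le\; 4\cdot 4C+2\cdot 4C \;=\; 24C,$$
so $\|\Delta(x,y)\|\le 12C$, comfortably within the required $19C$.

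The main obstacle is locating a shift identity for $\Delta$ with the stated property: every term on the right must be a $\Delta$-value whose two arguments can be forced into the large-norm region by a \emph{single} choice of $z$. Once such an identity is in hand the estimate is automatic, and the slack between the constant $12$ that the identity above produces and the paper's $19$ presumably reflects a slightly different decomposition favored by the authors; either way, the structural argument is identical.
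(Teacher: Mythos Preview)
Your argument is correct, and it takes a genuinely different route from the paper. The paper's proof, after invoking Theorem~\ref{thm1.r}, does not carry out the extension from $\|x\|+\|y\|\ge K$ to all of $X\times X$ by hand; instead it quotes Theorem~2 of Rassias~\cite{JRassias02}, which converts the restricted bound $4C$ into an unrestricted bound $18C$, and then adds the separate estimate $\|f(0)\|\le C$ (extracted from the proof of Theorem~\ref{thm1.r}) to reach $19C$. Your approach replaces that black-box citation by the explicit five-term identity
\[
2\Delta(x,y)=\Delta(x+z,y)+\Delta(x-z,y)-\Delta(x+y,z)-\Delta(x-y,z)+2\Delta(x,z),
\]
which you verify directly and then exploit with a single large shift $z$. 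This is more elementary and self-contained, and in fact it yields the sharper constant $12C$ rather than $19C$; the slack you noticed is real, not an artifact of a different decomposition on the authors' side. The only point worth flagging is that the edge case $X=\{0\}$ is not truly ``vacuous'' for the conclusion (one would need $\|f(0)\|$ bounded), but since $d>0$ forces the hypothesis itself to be empty in that case, the paper implicitly excludes it as well, so this is not a defect of your argument relative to the original.
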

\begin{proof}
By Theorem \ref{thm1.r} there exists $K > 0$ such that $f$
satisfies (\ref{thm1.r.2}) for all $x, y \in X$ with
$\| x \| + \| y \| \ge K$ and
$\| f(0) \| \le \frac{2+|r|+|s|}{|rs|} \delta$ (see the proof
of Theorem \ref{thm1.r}).
Using Theorem 2 of \cite{{JRassias02}}, we get that
\begin{align*}
\| f(x+y)+f(x-y)-2f(x)-2f(y) \|
& \le \frac{18(2+|r|+|s|)}{|rs|} \delta + \| f(0) \| \\
& \le \frac{19(2+|r|+|s|)}{|rs|} \delta
\end{align*}
all $x, y \in X$.
\end{proof}

\begin{thm}\label{thm3.r}
Let $d > 0$ and $\delta \ge 0$ be given.
Assume that an even mapping $f : X \to Y$ satisfies the
inequality (\ref{thm1.r.1}) for all $x, y \in X$ with
$\| x \| + \| y \| \ge d$.
Then there exists a unique quadratic mapping $Q : X \to Y$
such that $Q(x)=\lim_{n\to\infty}4^{-n}f(2^n x)$ and
\begin{equation}\label{thm3.r.1}
\| f(x)-Q(x) \| \le \frac{19(2+|r|+|s|)}{2|rs|} \delta
\end{equation}
for all $x \in X$.
\end{thm}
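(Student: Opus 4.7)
The plan is to chain together the two results already established in the section. Theorem \ref{thm2.r} has already done the heavy lifting by lifting the restricted-domain inequality \eqref{thm1.r.1} to the global Cauchy--Jensen-type quadratic inequality
\[
\|f(x+y)+f(x-y)-2f(x)-2f(y)\| \le \tfrac{19(2+|r|+|s|)}{|rs|}\,\delta
\]
for all $x,y\in X$. Once this is in hand, the present theorem is essentially a direct invocation of Czerwik's classical stability result, Theorem \ref{czerwik}.

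Concretely, I would first quote Theorem \ref{thm2.r} to pass from the restricted-domain estimate to the global quadratic-difference estimate above, writing the control constant as $\delta' := \tfrac{19(2+|r|+|s|)}{|rs|}\,\delta$. Then I would apply Theorem \ref{czerwik} with this $\delta'$ in place of $\delta$. The conclusion of that theorem furnishes a unique quadratic mapping $Q:X\to Y$ with $\|f(x)-Q(x)\| \le \delta'/2 = \tfrac{19(2+|r|+|s|)}{2|rs|}\,\delta$ for all $x\in X$, which is exactly the bound \eqref{thm3.r.1}.

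It remains only to identify this $Q$ with the limit $\lim_{n\to\infty}4^{-n}f(2^{n}x)$. This is a standard consequence of Czerwik's construction: from the global inequality, setting $y=x$ and using that $f$ is even with $\|f(0)\|\le \tfrac{2+|r|+|s|}{|rs|}\delta$ (obtained in the proof of Theorem \ref{thm1.r}), one derives $\|f(2x)-4f(x)\|\le C\delta$ for a constant $C$, then shows by the usual induction that the sequence $\{4^{-n}f(2^{n}x)\}$ is Cauchy, so convergent in the Banach space $Y$, and one defines $Q$ as its pointwise limit. The uniqueness clause of Theorem \ref{czerwik} then forces this limit to coincide with the quadratic map produced by that theorem.

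There is no genuine obstacle in this proof; the work has been absorbed by the preceding theorems, and the only care required is to spell out Czerwik's iterative construction so that the explicit limit formula $Q(x)=\lim_{n\to\infty}4^{-n}f(2^{n}x)$ is available, since Theorem \ref{czerwik} as quoted records only the existence of $Q$ and the bound $\delta'/2$, not the defining limit.
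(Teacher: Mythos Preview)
Your proposal is correct and follows exactly the paper's approach: the paper's proof is the single line ``The result follows from Theorems \ref{czerwik} and \ref{thm2.r},'' which is precisely the chaining you describe. Your additional remarks on recovering the explicit formula $Q(x)=\lim_{n\to\infty}4^{-n}f(2^{n}x)$ from Czerwik's iterative construction are a harmless elaboration of what is implicit in Theorem \ref{czerwik}.
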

\begin{proof}
The result follows from Theorems \ref{czerwik} and \ref{thm2.r}.
\end{proof}

Skof \cite{Skof} has proved an asymptotic property of the
additive mappings and Jung \cite{Jung98} has proved an
asymptotic property of the quadratic mappings (see also
\cite{jungk}).
We prove  such a property also for the quadratic mappings.

\begin{cor}
An even mapping $f : X \to Y$ satisfies (\ref{Gq}) if and
only if the asymptotic condition
\begin{equation}\label{cor.asy}
\| f(rx+sy)+rsf(x-y)-rf(x)-sf(y) \| \to 0 \quad
\mbox{as}~ \| x \| + \| y \| \to \infty
\end{equation}
holds true.
\end{cor}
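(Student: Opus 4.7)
My plan is to deduce the backward direction by first extracting from the asymptotic condition, via Theorem~\ref{thm3.r}, that $f$ solves the classical quadratic equation \eqref{q}, and then upgrading this to the parametric identity \eqref{Gq} through a scaling (homogeneity) argument. The forward direction is trivial: if $f$ solves \eqref{Gq}, then $f(rx+sy)+rsf(x-y)-rf(x)-sf(y) \equiv 0$, so \eqref{cor.asy} is immediate.

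For the converse, I would unpack \eqref{cor.asy}: for every $\eps > 0$ there exists $d_\eps > 0$ such that $\|f(rx+sy)+rsf(x-y)-rf(x)-sf(y)\| \le \eps$ whenever $\|x\|+\|y\| \ge d_\eps$. This is precisely the hypothesis of Theorem~\ref{thm3.r} with $\delta = \eps$ and $d = d_\eps$, so for each $\eps > 0$ that theorem yields a quadratic map $Q_\eps(x) = \lim_{n\to\infty} 4^{-n} f(2^n x)$ with $\|f(x)-Q_\eps(x)\| \le \frac{19(2+|r|+|s|)}{2|rs|}\,\eps$. The defining limit depends only on $f$, so $Q_\eps$ is in fact the \emph{same} mapping $Q$ for every $\eps$. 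Letting $\eps \to 0^{+}$ forces $f \equiv Q$, so $f$ is a quadratic function in the sense of \eqref{q}.

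To pass from \eqref{q} to \eqref{Gq}, I would set $\Delta(x,y) := f(rx+sy)+rsf(x-y)-rf(x)-sf(y)$. Since $f$ is quadratic, there is a symmetric bi-additive $B : X \times X \to Y$ with $f(x) = B(x,x)$, and bi-additivity gives $f(nx) = n^2 f(x)$ for every positive integer $n$, hence $\Delta(nx,ny) = n^2\Delta(x,y)$. For any fixed $(x,y) \neq (0,0)$ one has $\|nx\|+\|ny\| \to \infty$, so \eqref{cor.asy} forces $\|\Delta(nx,ny)\| \to 0$; combined with $\Delta(nx,ny) = n^2\Delta(x,y)$ this yields $\Delta(x,y) = 0$. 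The remaining case $(x,y)=(0,0)$ reduces to $\Delta(0,0) = (1 - r - s + rs)f(0) = rs\,f(0) = 0$, using $r+s = 1$ and $f(0) = 0$.

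The main obstacle is precisely this last step. When $r$ is irrational, a bi-additive $B$ need not satisfy $B(rx,ry) = r^2 B(x,y)$, so the stability conclusion of Theorem~\ref{thm3.r} only delivers \eqref{q} and not \eqref{Gq} directly; the scaling identity $\Delta(nx,ny) = n^2 \Delta(x,y)$ together with the asymptotic hypothesis is the essential device that bridges this gap.
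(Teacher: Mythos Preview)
Your argument is correct. The first half—turning the asymptotic condition into a family of approximations and invoking Theorem~\ref{thm3.r} to conclude that $f$ coincides with the quadratic map $Q(x)=\lim_{n\to\infty}4^{-n}f(2^n x)$—is essentially what the paper does; the only cosmetic difference is that you identify all the $Q_\eps$ directly via the common limit formula, whereas the paper indexes by $n$ and appeals to the uniqueness clause of Theorem~\ref{thm3.r}.

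Your second step, however, genuinely goes beyond the paper. The paper's proof ends once it knows that $f$ is quadratic in the sense of \eqref{q}; it never returns to verify \eqref{Gq}. When $r$ is rational this suffices by Theorem~\ref{thm3}, but for irrational $r$ the equivalence of \eqref{q} and \eqref{Gq} is not established anywhere in the paper, so an extra argument is needed. Your homogeneity trick $\Delta(nx,ny)=n^{2}\Delta(x,y)$ combined with a second use of the asymptotic hypothesis closes exactly this gap and yields \eqref{Gq} for arbitrary real $r$. In that sense your proof is the more complete one.
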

\begin{proof}
By the asymptotic condition (\ref{cor.asy}), there exists a
sequence $\{ \delta_n \}$ monotonically decreasing to $0$ such
that
\begin{equation}\label{p.cor.asy.1}
\| f(rx+sy)+rsf(x-y)-rf(x)-sf(y) \| \le \delta_n
\end{equation}
for all $x, y \in X$ with $\| x \| + \| y \| \ge n$.
Hence, it follows from (\ref{p.cor.asy.1}) and Theorem
\ref{thm3.r} that there exists a unique quadratic mapping
$Q_n : X \to Y$ such that
\begin{equation}\label{p.cor.asy.2}
\| f(x)-Q_n(x) \| \le \frac{19(2+|r|+|s|)}{2|rs|} \delta_n
\end{equation}
for all $x \in X$.
Since $\{ \delta_n \}$ is a monotonically decreasing sequence,
the quadratic mapping $Q_m$ satisfies (\ref{p.cor.asy.2}) for
all $m \ge n$.
The uniqueness of $Q_n$ implies $Q_m=Q_n$ for all $m \ge n$.
Hence, by letting $n \to \infty$ in (\ref{p.cor.asy.2}), we
conclude that $f$ is quadratic.
\end{proof}

\begin{cor}
Let $r$ be rational.
An even mapping $f : X \to Y$ is quadratic if and only if the
asymptotic condition (\ref{cor.asy}) holds true.
\end{cor}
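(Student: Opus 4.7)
The plan is to derive this corollary as a direct composition of two results already available in the paper: the preceding corollary (the asymptotic characterization of solutions of (\ref{Gq}) among even mappings) and Theorem \ref{thm3} (the equivalence of (\ref{Gq}) and (\ref{q}) when $r$ is rational). No new estimates or auxiliary constructions should be needed.

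First I would handle the easy direction. If $f$ is quadratic and even, then by Theorem \ref{thm3}, since $r\in\mathbb{Q}$, $f$ also satisfies (\ref{Gq}) identically. Then the expression $f(rx+sy)+rsf(x-y)-rf(x)-sf(y)$ vanishes for all $x,y\in X$, so the asymptotic condition (\ref{cor.asy}) is trivially fulfilled.

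For the converse, suppose that $f$ is even and satisfies (\ref{cor.asy}). Applying the preceding corollary verbatim yields that $f$ satisfies (\ref{Gq}) on all of $X\times X$. Since $r$ is assumed rational, Theorem \ref{thm3} then upgrades this to $f$ being a solution of (\ref{q}), which by definition means that $f$ is quadratic.

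There is essentially no obstacle here: all the work has already been done in the previous corollary (which builds the limit quadratic map from the stability estimate of Theorem \ref{thm3.r} and uses its uniqueness) and in Theorem \ref{thm3} (which requires the rationality hypothesis to pass from (\ref{Gq}) to (\ref{q})). The only thing worth flagging in the write-up is that the rationality of $r$ is needed only for the passage from (\ref{Gq}) to (\ref{q}); the asymptotic $\Leftrightarrow$ (\ref{Gq}) equivalence for even mappings holds without any arithmetic assumption on $r$.
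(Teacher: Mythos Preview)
Your proposal is correct and matches the paper's intended argument: the corollary is stated there without proof, as an immediate consequence of the preceding corollary combined with Theorem~\ref{thm3}. One small correction to your closing commentary: you actually invoke Theorem~\ref{thm3} in \emph{both} directions---from (\ref{q}) to (\ref{Gq}) in the forward implication and from (\ref{Gq}) to (\ref{q}) in the converse---so the rationality hypothesis is used twice, not only for the passage from (\ref{Gq}) to (\ref{q}).
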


{\small

}
\end{document}